\newcommand{\N}{\mathbb{N}}
\newcommand{\Z}{\mathbb{Z}}
\newcommand{\R}{\mathbb{R}}
\renewcommand{\H}{\mathbb{H}}
\newcommand{\SO}{\mathrm{SO}}
\newcommand{\K}{\mathcal{K}}
\newcommand{\wK}{\widetilde{\mathcal{K}}}
\newcommand{\wsigma}{\widetilde{\sigma}}
\newcommand{\wtau}{\widetilde{\tau}}
\newcommand{\vol}{\mathrm{Vol}}
\newcommand{\st}{\mathrm{st}}
\newcommand{\bH}{\partial_\infty\H}
\numberwithin{equation}{section}
\theoremstyle{plain}
\newtheorem{theorem}{Theorem}[section]
\newtheorem{lemma}[theorem]{Lemma}
\newtheorem*{claim*}{Claim}
\newtheorem*{theorem*}{Theorem}
\newtheorem*{proposition*}{Proposition}
\theoremstyle{definition}
\theoremstyle{remark}
\title{A note on the integrality of volumes of representations}
\author{Sungwoon Kim%\footnote{This work was supported by the research grant from the Chuongbong Academic Research Fund of Jeju National University in 2021.}
}
\date{}
\begin{document}

\maketitle

\begin{abstract}
Let $\Gamma$ be a torsion-free, non-uniform lattice in $\SO(2n,1)$. We present an elementary, combinatorial--geometrical proof of a theorem of Bucher, Burger, and Iozzi in \cite{BBI21} which states that the volume of a representation $\rho:\Gamma\to\SO(2n,1)$, properly normalized, is an integer if $n$ is greater than or equal to  $2$.
\end{abstract}

%\tableofcontents

%%%%%%%%%%%%%%%%%%%%%%%%%%%%%%%%%%%%%%%%%%%%%%%%%%%%
%%%%%%%%%%%%%%%%%%%%%%%%%%%%%%%%%%%%%%%%%%%%%%%%%%%%
\section{Introduction}
%%%%%%%%%%%%%%%%%%%%%%%%%%%%%%%%%%%%%%%%%%%%%%%%%%%%
%%%%%%%%%%%%%%%%%%%%%%%%%%%%%%%%%%%%%%%%%%%%%%%%%%%%

Let $\Gamma$ be a torsion-free lattice in $\SO(m,1)$ and $M=\Gamma\backslash\H^{m}$ be the quotient hyperbolic manifold. Let $\rho:\Gamma\to \SO(m,1)$ be a representation.
If $\Gamma$ is a uniform lattice, i.e., $M$ is compact, \emph{the volume of $\rho$} is defined by 
\begin{equation}\label{def:vol} \vol(\rho)=\int_{M}f^*\omega_{\H^{m}}.
\end{equation}
where $f:\H^{m}\to\H^{m}$ is a $\rho$-equivariant smooth map and $\omega_{\H^{m}}$ is the Riemannian volume form on $\H^{m}$.
The volume of $\rho$ is well defined, independent of the choice of $f$ and gives rise to a real-valued function on the representation variety $\mathrm{Hom}(\Gamma,\SO(m,1))$, $$\vol : \mathrm{Hom}(\Gamma,\SO(m,1))\to \R.$$

The function $\vol$ has played an important role in detecting discrete, faithful representations and studying the rigidity of hyperbolic lattices in the representation variety.
The function $\vol$ satisfies a Milnor--Wood type inequality:
\begin{equation}\label{eqn:MW} | \vol(\rho) | \leq \vol(i_\Gamma)=\vol(\Gamma\backslash\H^m) \end{equation}
with equality if and only if $\rho$ is discrete and faithful, where $i_\Gamma : \Gamma \to \SO(m,1)$ is the canonical inclusion.
A representation is said to be \emph{maximal} if equality holds in (\ref{eqn:MW}).
If $m\geq 3$, then, by Mostow's rigidity theorem, every discrete faithful representation of $\Gamma$ into $\SO(m,1)$ is conjugate to $i_\Gamma$ and hence every maximal representation is conjugate to $i_\Gamma$.
Another feature of the function $\vol$ is that it is constant on each connected component of $\mathrm{Hom}(\Gamma,\SO(m,1))$.
This implies that any continuous deformation of a maximal representation is still maximal.
Combining this with Mostow's rigidity theorem gives that if $m\geq 3$, any continuous deformation of $i_\Gamma$ is conjugate to $i_\Gamma$.
This is the so-called \emph{volume rigidity theorem}, which is stronger than the corresponding local rigidity theorem for uniform hyperbolic lattices.
Various approaches to the volumes of representations in the uniform lattice case have been provided by Thurston, Gromov, Goldman, Reznikov, and Besson--Courtois--Gallot.
For more details, we refer the reader to \cite{Th78,Gr81,Go82,Re96,BCG07}.

When $\Gamma$ is a torsion-free, non-uniform lattice in $\SO(m,1)$, one faces some problems in defining the volume of  a representation, due to the non-compactness of $M$.
One can try to define the volume of a representation in the same way as in the uniform lattice case above.
However, since $M$ is non-compact, the integral in (\ref{def:vol}) becomes an improper integral and thus the problem of integrability arises.
To overcome this problem, Dunfield used particular $\rho$-equivariant maps, called \emph{pseudo-developing maps}, and then partially proved the well-definedness of the volume of a representation \cite{Du99}.
Francaviglia later completely proved the well-definedness of this volume \cite{Fa04}.
Furthermore, as in the uniform lattice case, a Milnor--Wood type inequality and the volume rigidity theorem have been established in the non-uniform lattice case by \cite{Fa04,BBI13,KK16}.
Bucher, Burger, and Iozzi used bounded cohomology to define the volume of a representation and proved the volume rigidity theorem.
It was shown in \cite{KK16} that the definitions given by \cite{Fa04, BBI13} are equivalent.
%We refer the reader to \cite{Dun,FK,BCG,BBI,KK,Rez} for more details.

The range of the function $\vol$ is an interesting part of the study of the volume of a representation.
In the uniform lattice case, the range of $\vol$ is finite, that is, the volume of a representation takes only finitely many values.
In the non-uniform lattice case, the function $\vol$ turns out to be continuous (See \cite{KK16,BBI21}).
Unlike the uniform lattice case, a non-empty interval is contained in the range of the function $\vol$ when $m=2,3$.
In particular, when $m=2$, the range of $\vol$ coincides with the interval $[\chi(M),-\chi(M)]$, where $\chi(M)$ is the Euler number of $M$.
For $m\geq 4$, Kim and Kim \cite{KK16} proved that the function $\vol$ in the non-uniform lattice case is also constant on each connected component, as in the uniform lattice case.
Furthermore, when $\Gamma$ is uniform and $m$ is even, it is well known that the volume of a representation $\rho:\Gamma\to\SO(m,1)$, properly normalized, is an integer.
%\[ \frac{2\vol(\rho)}{\vol(S^{n})} \in \Z.\]
In \cite{BBI21}, Bucher, Burger, and Iozzi extended this integrality to non-uniform lattices, as follows.

\begin{theorem}[Integrality theorem, \cite{BBI21}]\label{thm:BBI}
Let $\Gamma$ be a torsion-free, non-uniform lattice in $\SO(2n,1)$ and $\rho :\Gamma\to \SO(2n,1)$ be a representation.
Assume that $n\geq 2$.
Then the following holds.
\begin{itemize}
\item[(i)] If the manifold $M=\Gamma\backslash\H^{2n}$ has only toric cusps,
\[ \frac{2\vol(\rho)}{\vol(S^{2n})} \in \Z.\]
\item[(ii)] In general,
\[ \frac{2\vol(\rho)}{\vol(S^{2n})} \in \frac{1}{B_{2n-1}}\cdot\Z,\]
where $B_{2n-1}$ is the Bieberbach number in dimension $2n-1$.
\end{itemize}
\end{theorem}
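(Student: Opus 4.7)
The plan is to realize $2\vol(\rho)/\vol(S^{2n})$ as the degree of a continuous map between closed oriented $2n$-manifolds, constructed from the pseudo-developing map by collapsing each cusp to a point at infinity and then doubling across the truncated cusp cross-sections. Enumerate the cusps of $M$ by $i=1,\ldots,k$; let $P_i\le\Gamma$ be the cusp subgroup fixing $p_i\in\bH$, and $B_i\subset\H^{2n}$ a $P_i$-invariant horoball giving the standard cusp neighborhood $P_i\backslash B_i$ of $M$. Each $P_i$ is virtually abelian, so $\rho(P_i)$ is amenable and fixes some point $q_i\in\bH$. I would construct a $\rho$-equivariant smooth map $f:\H^{2n}\to\overline{\H^{2n}}$ whose image on $B_i$ lies in a $\rho(P_i)$-invariant horoball at $q_i$ and satisfies $f(x)\to q_i$ as $x\to p_i$, by prescribing $f$ on a fundamental domain with the desired cusp behavior and extending equivariantly.

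Setting $M_0=M\setminus\bigsqcup_i(P_i\backslash B_i^\circ)$ gives a compact oriented $2n$-manifold with flat boundary $\partial M_0=\bigsqcup_i N_i$ (each $N_i$ a torus in case~(i)). By convergence of the pseudo-developing volume integral, $\vol(\rho)=\lim\int_{M_0}f^*\omega_{\H^{2n}}$ as the truncations deepen. Identify $S^{2n}$ as the union $\overline{\H^{2n}}_+\cup_{\partial_\infty}\overline{\H^{2n}}_-$ of two closed $2n$-balls glued along the boundary sphere, normalized so that each hemisphere pulls the round volume form back to $\omega_{\H^{2n}}$, and let $\iota:S^{2n}\to S^{2n}$ be the orientation-reversing reflection exchanging the hemispheres. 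Form the closed oriented double $DM_0$ of $M_0$ along $\partial M_0$, and extend $f$ to $F:DM_0\to S^{2n}$ by $F=f$ (into $\overline{\H^{2n}}_+$) on the first copy of $M_0$ and $F=\iota\circ f$ (into $\overline{\H^{2n}}_-$) on the second copy; the two definitions agree on $\partial M_0$ in the deep-cusp limit since $F(N_i)$ accumulates on $\{q_i\}\subset\partial_\infty\H^{2n}$, so $F$ is continuous. Tracking orientations gives
\[
\deg(F)\cdot\vol(S^{2n}) \;=\; \int_{DM_0}F^*\omega_{S^{2n}} \;=\; 2\int_{M_0}f^*\omega_{\H^{2n}} \;=\; 2\vol(\rho),
\]
whence $2\vol(\rho)/\vol(S^{2n})=\deg(F)\in\Z$, proving (i).

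For (ii), pass to a finite-index subgroup $\Gamma'\le\Gamma$ whose quotient $M'=\Gamma'\backslash\H^{2n}$ has only toric cusps; by Bieberbach's theorem, $\Gamma'$ can be arranged so that the index contributes at most a factor $B_{2n-1}$ to the denominator (a careful cusp-by-cusp analysis is required to keep this factor tight). Applying (i) to $(\Gamma',\rho|_{\Gamma'})$ together with $\vol(\rho|_{\Gamma'})=[\Gamma:\Gamma']\cdot\vol(\rho)$ yields $2\vol(\rho)/\vol(S^{2n})\in\frac{1}{B_{2n-1}}\Z$.

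The main obstacle is the cusp-collapsing construction in Step~1 together with the limiting continuity of $F$ on $\partial M_0$. Amenability guarantees only a fixed boundary point $q_i$, not a structure allowing $\rho$-equivariant collapse to $\{q_i\}$. Generically $\rho(P_i)$ acts on horospheres at $q_i$ with both translational and rotational parts, and the equivariant $f$ must intertwine this with the Euclidean $P_i$-action on $B_i$ in such a way that the image actually accumulates on the single point $q_i$ rather than orbiting around it. The hypothesis $n\ge 2$ enters precisely here: the $(2n-1)$-dimensional cusp cross-sections provide enough transverse room for such an intertwining, whereas in the excluded case $n=1$ the rotational part of $\rho(P_i)$ yields a nonzero winding around $q_i$ obstructing continuity of $F$, which is exactly the mechanism by which integrality fails for surface-group representations.
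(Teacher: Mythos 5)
There are genuine gaps, and they sit exactly where the hypotheses of the theorem have to be used. First, the basic bookkeeping of your degree formula does not close up: there is no volume\-/preserving identification of $\H^{2n}$ with an open hemisphere of the round sphere (one has infinite volume, the other volume $\vol(S^{2n})/2$), so the chain $\deg(F)\vol(S^{2n})=\int_{DM_0}F^*\omega_{S^{2n}}=2\int_{M_0}f^*\omega_{\H^{2n}}$ cannot hold as written; the quantity $2\vol(\rho)/\vol(S^{2n})$ is an Euler--number type invariant (this is Hopf's intrinsic Gauss--Bonnet, which the paper implements combinatorially via generalized angle sums), not the degree of a doubled map to the round sphere. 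Second, your construction needs each $\rho(P_i)$ to fix a point $q_i\in\partial_\infty\H^{2n}$, but amenability only gives a fixed point in $\H^{2n}\cup\partial_\infty\H^{2n}$: the image of a cusp group can be conjugate into $\SO(2n)$ and act on $S^{2n-1}$ with no fixed direction at all (a dense subgroup of a maximal torus). In that case there is nothing to collapse the cusp to at infinity, $f(N_i)$ accumulates in the interior, and $F$ cannot be glued along the equator. This interior\-/fixed\-/point case is precisely where the paper uses both hypotheses: for toric cusps $\rho(\Gamma_{B_i})$ is conjugate into $\mathrm{S}(\mathrm{O}(2)^n\times\mathrm{O}(1))$, hence stabilizes a totally geodesic copy of $\H^2$ and fixes a point in it, and for $n\ge2$ one can degenerate the cusp simplices into that $\H^2$ so that their angle contribution at the cusp vertex vanishes; your ``transverse room''/``winding'' heuristic is not this mechanism. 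Even when $q_i$ does lie at infinity, your $F$ is continuous only in a limit in which each $N_i$ is crushed to a point, and the resulting map lives on a non\-/manifold quotient of $DM_0$; one would have to work with the fundamental class of that pseudomanifold and justify the degree identity there.

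Part (ii) as written also does not give the stated bound. Using $\vol(\rho|_{\Gamma'})=[\Gamma:\Gamma']\vol(\rho)$ for a finite\-/index $\Gamma'$ with toric cusps yields only $2\vol(\rho)/\vol(S^{2n})\in\frac{1}{[\Gamma:\Gamma']}\Z$, and the global index $[\Gamma:\Gamma']$ is in general neither equal to nor a divisor of $B_{2n-1}$; no choice of $\Gamma'$ forces it to be. The Bieberbach number controls only the degree of a torus cover of each individual flat cross\-/section $N_i$. To exploit it one needs a cusp\-/local quantity: the paper shows that the whole failure of integrality is concentrated in one number per cusp, namely the angle sum $W(\K,c_i;f)$ at the cusp vertex (all other contributions being integers by a local degree argument), and that passing to the cover multiplies exactly this number by $\deg(N_i'\to N_i)$, which divides $B_{2n-1}$. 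Your ``careful cusp\-/by\-/cusp analysis'' placeholder is where the actual content of (ii) lies, and the global doubling construction does not obviously localize in this way.
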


Recall that the Bieberbach number is the smallest integer $B_d$  such that
any compact flat $d$-manifold has a covering of degree $B_d$ that is a torus.
Bucher, Burger, and Iozzi used the theory of bounded cohomology in order to verify the integrality of the volume of a representation.
The aim of the present paper is to give an elementary combinatorial--geometrical proof of the integrality theorem, which gives a geometric understanding of the integrality theorem.
The key ingredient of our apporach to the integrality theorem is that the volume of a geodesic simplex in $\H^{m}$ is computed by its generalized angle sum when $m$ is even.
More precisely, in \cite{Ho}, Hopf proved that 
\[
W(T)= 
\begin{cases} 
\ \displaystyle (-1)^{\frac{m}{2}}\frac{2\vol(T)}{\vol(S^m)} & \text{for $m$ even,} \\ 
\ 0 & \text{for $m$ odd}.
\end{cases}
\]
Roughly speaking, the generalized angle sum $W(T)$ of a geodesic simplex $T$ in $\H^{2n}$ is the alternating sum of all interior angles of $T$ assigned at the faces of $T$.

To give the ouline of our proof of the integrality theorem, let $\K$ be a triangulation of $M$ by geodesic simplices.
Then the volume of $M$ is the sum of the volumes of all geodesic simplices of $\K$, which is proportional to the sum of all generalized angle sums of all geodesic simplices of $\K$.
For each face $\tau$ of $\K$, the sum of the interior angles at $\tau$ of all geodesic simplices of $\K$ containing $\tau$ as a face equals $1$ if $M$ is a smooth manifold.
From this observation, Kellerhals and Zehrt \cite{KZ01} gave an elementary proof of the Gauss--Bonnet formula for a $2n$-dimensional complete hyperbolic manifold $M$ of finite volume 
\[ \vol(M)=(-1)^{n}\frac{\vol(S^{2n})}{2}\chi(M),\]
which is regarded as the volume of the canonical inclusion $i_\Gamma :\Gamma \to \SO(2n,1)$.

Similarly, the volume of an arbitrary representation $\rho:\Gamma\to \SO(2n,1)$ can be computed in terms of interior angles.
Let $f:\H^{2n}\to\H^{2n}$ be a $\rho$-equivariant pseudo-developing map such that $f$ sends each geodesic simplex of $\wK$ to a geodesic simplex in $\H^{2n}$, where $\wK$ is a triangulation of $\H^{2n}$ induced from the triangulation $\K$ of $M$.
Let $\tau$ be an arbitrary face of $\K$ and $\wtau$ be a lift of $\tau$ to $\H^{2n}$.
Denote by $W(\K,\tau;f)$ the sum of the interior angles at $f(\wtau)$ of all geodesic simplices $f(\wsigma)$ with $\wtau\subset \wsigma\in \wK$.
Note that $W(\K,\tau;f)$ does not depend on the choice of $\wtau$.
Then, one can verify the following equality:
\begin{align}\label{eqn:ve}
(-1)^{n}\frac{2\vol(\rho)}{\vol(S^{2n})}=\sum_{\tau\in\K} (-1)^{\dim \tau} W(\K,\tau;f).
%&=\sum_{\tau\in\K} (-1)^{\dim \tau} \deg_\tau f \in \Z
\end{align}
To prove the integrality theorem, we first show that when $M$ has only toric cusps, every $W(\K,\tau;f)$ occurring in (\ref{eqn:ve}) is an integer, which immediately leads to the integrality theorem.
Then the integrality of $W(\K,\tau;f)$ in the toric cusp case enables us to prove the integrality theorem in the general case.

%\section{Preliminaries}

\section{Proof of the integrality theorem}
This section will be devoted to the combinatorial-geometrical proof of Theorem \ref{thm:BBI}.
Before proving the integrality theorem for non-uniform lattices, we give a proof of the integrality theorem for uniform lattices, which makes it easy to understand the main idea of our proof.

%\subsection{Generalized angle sums}

First of all, we begin by recalling from \cite{Ho} the notion of a generalized angle sum, which is the key ingredient of our approach.
Let $T$ be a geodesic simplex in $\H^{m}$ and $\tau$ be a face of $T$.
Choose an interior point $x$ of $\tau$ and an $(m-1)$-sphere $S(x,r)$ of radius $r>0$ at $x$.
Then the interior angle of $\sigma$ at the apex $\tau$ is defined by 
\[ W(T,\tau)=\frac{\vol_{m-1}(T\cap S(x,r))}{\vol_{m-1}(S(x,r))}.\]
Note that $W(T,\tau)$ is independent of the choice of $x$ and $r$.
If $\tau$ belongs to the ideal boundary $\partial_\infty \H^{m}$ of $\H^{m}$, the interior angle of $T$ at $\tau$ is defined to be zero.
The \emph{generalized angle sum} $W(T)$ of $T$ is defined by 
\[ W(T)=\sum_{\tau\in T} (-1)^{\dim \tau} W(T,\tau),\]
and satisfies
\[
W(T)= 
\begin{cases} 
\ \displaystyle (-1)^{\frac{m}{2}}\frac{2\vol(T)}{\vol(S^m)} & \text{for $m$ even,} \\ 
\ 0 & \text{for $m$ odd}.
\end{cases}
\]
Thus, in hyperbolic spaces of even dimension, the volume of a geodesic simplex can be computed by its generalized angle sum.
For more details, see \cite{Ho}.

Throughout this section, let $\Gamma$ denote a torsion-free lattice in $\SO(2n,1)$ and $M= \Gamma\backslash \H^{2n}$  be the quotient manifold.
Let $\K$ be a triangulation of $M$ and $\wK$ its lift to a triangulation of $\widetilde M=\H^{2n}$.

\subsection{Uniform lattice case}\label{sec:uniform}
We first give a combinatorial--geometrical proof of the following well-known integrality theorem for uniform lattices.

\begin{theorem}\label{thm:compact}
Let $\Gamma$ be a torsion-free uniform lattice in $\SO(2n,1)$ and $\rho :\Gamma\to \SO(2n,1)$ be a representation.
Assume that $n\geq 2$.
Then,
\[ \frac{2\vol(\rho)}{\vol(S^{2n})} \in \Z.\]
\end{theorem}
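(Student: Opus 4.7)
The plan is to follow the outline given before the theorem statement: use Hopf's formula on each geodesic simplex of a smooth triangulation of $M$ and sum, then verify that the resulting alternating sum of generalized angle sums is an integer by a link-map analysis.

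First, fix a smooth geodesic triangulation $\K$ of the compact manifold $M$ with $\Gamma$-equivariant lift $\wK$ to $\H^{2n}$, and construct a $\rho$-equivariant piecewise-geodesic map $f:\H^{2n}\to\H^{2n}$ by specifying $\rho$-equivariant images of the vertices of $\wK$ and straightening on each simplex, after a generic perturbation of vertex images so that every image simplex is non-degenerate. This $f$ is used to compute $\vol(\rho)$ via (\ref{def:vol}).

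Second, derive equation (\ref{eqn:ve}). Writing $\int_\sigma f^*\omega_{\H^{2n}} = \epsilon(\sigma)\vol(f(\wsigma))$ with $\epsilon(\sigma)=\pm 1$ according to the orientation behavior of $f$ on $\wsigma$, and applying Hopf's formula $\vol(f(\wsigma))=(-1)^n\frac{\vol(S^{2n})}{2}W(f(\wsigma))$, expand each $W(f(\wsigma))$ as an alternating sum of interior angles over faces $\wtau\subset\wsigma$. Interchanging the order of summation by collecting contributions at each face $\tau$ of $\K$ produces (\ref{eqn:ve}).

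Third, the heart of the argument is to show that each $W(\K,\tau;f)$ is an integer. This is immediate for $\tau$ of top dimension (value $1$) and for codimension-one $\tau$ (value $1$, since exactly two top simplices meet along $\tau$, each contributing $\tfrac{1}{2}$). For $\tau$ of codimension $k\geq 2$, the link $L_\tau$ of $\tau$ in $\K$ is a triangulated $(k-1)$-sphere, and $f$ induces a piecewise spherical-geodesic map $\phi_\tau:L_\tau\to S^{k-1}$ onto the unit normal sphere to $f(\wtau)$ at an interior point. When each spherical volume is weighted by $\epsilon(\sigma)$, the total becomes $\vol(S^{k-1})^{-1}\int_{L_\tau}\phi_\tau^*\omega_{S^{k-1}}=\deg(\phi_\tau)\in\Z$, since $L_\tau$ is a closed sphere.

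The main obstacle I expect is the careful bookkeeping of signs: the interior angles $W(f(\wsigma),f(\wtau))$ as defined are unsigned ratios of volumes, whereas the degree interpretation requires orientation data. The reconciliation is that the signs $\epsilon(\sigma)$ entering via $\int_\sigma f^*\omega_{\H^{2n}}$ in the second step are precisely the ones required in the third step to convert the unsigned sum of spherical volumes over $\wsigma\supset\wtau$ into the algebraic spherical area computing $\deg(\phi_\tau)$. Once this is in hand, the right-hand side of (\ref{eqn:ve}) is a $\Z$-linear combination of integers, so $\frac{2\vol(\rho)}{\vol(S^{2n})}\in\Z$.
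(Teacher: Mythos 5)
Your proposal is correct and follows essentially the same route as the paper: the same straightened, non-degenerate, $\rho$-equivariant map, the same rearrangement of Hopf's formula into the alternating sum $\sum_{\tau}(-1)^{\dim\tau}W(\K,\tau;f)$, and the same key step of identifying the signed angle sum at each face with the degree of a map between spheres. The only cosmetic difference is that you compute the degree on the link $L_\tau\cong S^{k-1}$ mapped to the unit normal sphere of $f(\wtau)$, whereas the paper (Lemma \ref{lem:integrality}) uses the boundary of the star $\partial\st(\wtau)\cong S^{2n-1}$ mapped to the full unit tangent sphere $T^1_{f(x)}\H^{2n}$; these yield the same integer because the normalized solid angle in $S^{2n-1}$ of the tangent cone of $f(\wsigma)$ along $f(\wtau)$ equals the normalized solid angle of its normal cone in $S^{k-1}$.
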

\begin{proof}
Let $\K$ be a triangulation of $M=\Gamma\backslash\H^{2n}$.
Denote by $\K_i$ the collection of all $i$-dimensional faces of $\K$.
The sum $\sum_{\sigma\in \K_{2n}}\sigma$ of all top-dimensional simplices in $\K$ yields a fundamental cycle of $M$.
Given a representation $\rho : \Gamma\to \SO(2n,1)$, the volume of $\rho$ is computed by 
\[ \vol(\rho)=\int_{M}f^*\omega_{\H^{2n}}=\sum_{\sigma\in \K_{2n}} \int_{\sigma}f^*\omega_{\H^{2n}}=\sum_{\sigma\in \K_{2n}} \epsilon(f,\sigma)\vol \left( \sigma;f  \right) \]
where $f :\H^{2n} \to \H^{2n}$ is a $\rho$-equivariant smooth map, $\widetilde \sigma$ is a lift of $\sigma$ to $\H^{2n}$, and \begin{displaymath}
\epsilon(f,\sigma)=\left\{ \begin{array}{ll} +1 & \textrm{if $f : \wsigma \rightarrow f(\wsigma)$ is orientation preserving,} \\
-1 & \textrm{otherwise} \end{array} \right.
\end{displaymath} and $\vol(\sigma;f)=\vol(f(\wsigma))$.
By the $\rho$-equivariance of $f$, both $\epsilon(f,\sigma)$ and $\vol(\sigma;f)$ are independent of the choice of $\wsigma$.
Furthermore, it turns out that $\vol(\rho)$ does not depend on either the choice of $f$ or the triangulation $\K$ of $M$.
Lemma 5.2 in \cite{BCG07} provides  a $\rho$-equivariant map $f:\H^{2n}\to\H^{2n}$ which is non-degenerate in a certain sense which will shortly be made precise.

\begin{lemma}[Besson--Courtois--Gallot]\label{lem:BCGmap}
Let $\Gamma$ be a uniform lattice in $\SO(m,1)$.
Given a representation $\rho : \Gamma \to \SO(m,1)$, there exist a triangulation $\K$ of $\Gamma\backslash\H^{m}$ and a piecewise continuous $\rho$-equivariant and affine map $f:\H^{m}\to\H^{m}$ which is non-degenerate in the sense that the image of each non-degenerate geodesic simplex of $\wK$ by $f$ is a non-degenerate geodesic simplex in $\H^{m}$.
\end{lemma}

We say that a $\rho$-equivariant map $f:\H^{m}\to\H^{m}$ is \emph{non-degenerate with respect to $\K$} if it is non-degenerate in the sense of Besson, Courtois, and Gallot as above.
From now on, we take both $\K$ and $f:\H^{2n}\to\H^{2n}$ as in Lemma \ref{lem:BCGmap}.
As seen before, the volume of each top-dimensional geodesic simplex $T$ in $\H^{2n}$ is computed by a generalized angle sum $W(T)$ as follows: 
\[ \vol(T)= \frac{(-1)^{n}}{2}\vol(S^{2n})W(T).\]

Let $\sigma$ be a top-dimensional simplex of $\K$ and $\tau$ be a face of $\sigma$.
Let $\wsigma$ be a lift of $\sigma$ to $\H^{2n}$ and $\wtau$ be a face of $\wsigma$ which is a lift of $\tau$.
Then we denote the interior angle of $f(\wsigma)$ at the apex $f(\wtau)$ by $W(\sigma,\tau;f)$ and the generalized angle sum of $f(\wsigma)$ by $W(\sigma;f)$, i.e., \[W(\sigma,\tau;f)=W(f(\wsigma),f(\wtau)) \text{ and }
 W(\sigma;f)=\sum_{\tau\subset \sigma} (-1)^{\dim \tau} W(\sigma,\tau;f).\]
It can be easily seen that both $W(\sigma,\tau;f)$ and $W(\sigma;f)$ are independent of the choice of $\wsigma$.
For convenience, we define another notation
\[ W(\K,\tau;f):=\sum_{\substack{\sigma \in \K_{2n} \\ \tau\subset \sigma}} \epsilon(f,\sigma)W(\sigma, \tau;f) \]
%Roughly speaking, $W(\K,\tau;f)$ is the sum of all interior angles at $f(\wtau)$ occurring in the collection of the images of geodesic simplices of $\wK$ by $f$.
For instance, to compute the volume of the canonical inclusion $i_\Gamma:\Gamma\to\SO(2n,1)$, one can take $f=id_{\H^{2n}} :\H^{2n}\to\H^{2n}$ and then one can easily see that $W(\K,\tau;id_{\H^{2n}})=1$ for any $\tau\in \K$ and any triangulation $\K$ of $M$.

From the following equations, we can see that once the integrality of $W(\K,\tau;f)$ holds for all $\tau\in\K$, the integrality theorem for uniform lattices immediately follows.
\begin{align}
(-1)^{n}\frac{2\vol(\rho)}{\vol(S^{2n})}&=\sum_{\sigma\in \K_{2n}} \epsilon(f,\sigma)W(\sigma;f)  \nonumber \\
%&=(-1)^{n}\sum_{\sigma\in \K} \sum_{k=0}^{2n} W_k(\str(f\circ \tilde \sigma )) \\
&= \sum_{\sigma\in \K_{2n}} \sum_{\tau\subset \sigma} (-1)^{\dim \tau}\epsilon(f,\sigma)W(\sigma, \tau;f)  \nonumber \\
&=\sum_{\tau\in\K} (-1)^{\dim \tau}\sum_{\substack{\sigma \in \K_{2n} \\ \tau\subset \sigma}} \epsilon(f,\sigma)W(\sigma, \tau;f)  \nonumber \\
&=\sum_{\tau\in\K} (-1)^{\dim \tau} W(\K,\tau;f) \label{eqn:integraltau}
%&=\sum_{\tau\in\K} (-1)^{\dim \tau} \deg_\tau f \in \Z
\end{align}

The non-degenercy of $f$ in the sense of Lemma \ref{lem:BCGmap} allows us to prove that $W(\K,\tau;f)$ is always an integer, as follows.

\begin{lemma}\label{lem:integrality} Let $\rho : \Gamma \to \SO(2n,1)$ be a representation and $\K$ be a triangulation of $M=\Gamma\backslash \H^{2n}$.
If $f:\H^{2n}\to\H^{2n}$ is a piecewise continuous $\rho$-equivariant affine and non-degenerate map with respect to $\K$, then for any $\tau\in\K$,
\[ W(\K,\tau;f)=\sum_{\substack{\sigma \in \K_{2n} \\ \tau\subset \sigma}} \epsilon(f,\sigma)W(\sigma,\tau;f) \in \Z.
\]
where the sum runs over all $\sigma\in \K_{2n}$ containing $\tau$ as a face.
\end{lemma}

\begin{proof}
Let $\wtau$ be a lift of $\tau$ to $\H^{2n}$ and $x$ be an interior point of $\wtau$.
Since $\wK$ is a triangulation of $\H^{2n}$, the star $\st(\wtau)$ of $\wtau$, i.e., the union of simplices in $\wK$ having $\wtau$ as a face, is homeomorphic to a closed ball and thus its boundary $\partial \st(\wtau)$ is homeomorphic to the $(2n-1)$-sphere $S^{2n-1}$.
Define a map $f_\tau : \partial \st(\wtau) \to T_{f(x)}^1 \H^{2n}$ by assigning to each $y\in \partial \st(\wtau)$ the unit vector at $f(x) \in \H^{2n}$ corresponding to the geodesic ray starting from $f(x)$ towards $f(y)$.
This is possible due to the non-degeneracy of $f$.
Let $[\partial \st(\wtau))]_\Z$ and $[T_{f(x)}^1\H^{2n}]_\Z$ be the integral fundamental classes of $\partial \st(\wtau))$ and $T_{f(x)}^1\H^{2n}$ in degree $2n-1$ respectively.
Indeed, $H_{2n-1}(\partial \st(\wtau));\Z)\cong H_{2n-1}(T_{f(x)}^1\H^{2n};\Z)\cong H_{2n-1}(S^{2n-1};\Z)\cong \Z$.
Define the degree of $f$ to $\tau$ by 
\[ (f_\tau)_*[\partial \st(\wtau))]_\Z=(\deg_\tau f)\cdot [T_{f(x)}^1\H^{2n}]_\Z.
\]
Clearly $\deg_\tau f \in \Z$ and furthemore, one can easily deduce that 
\[ W(\K,\tau;f)=\sum_{\substack{\sigma \in \K_n \\ \tau\subset \sigma}} \epsilon(f,\sigma)W(\sigma,\tau;f) = \deg_\tau f \in \Z \]
which completes the proof.
\end{proof}

Finally, as mentioned above, Lemma \ref{lem:integrality} and the expression (\ref{eqn:integraltau}) immediately prove Theorem \ref{thm:compact}, that is, the integrality theorem for uniform lattices.
\end{proof}

\subsection{The case of a non-uniform lattice}\label{sec:nonuniform}

We now deal with the integrality theorem for non-uniform lattices.
That is, we will give a combinatorial-geometrical proof of Theorem \ref{thm:BBI}.
Suppose that $\Gamma$ is a torsion-free, non-uniform lattice in $\SO(m,1)$.
Then the quotient manifold $M= \Gamma\backslash\H^{m}$ is a complete, non-compact hyperbolic manifold of finite volume.
It is well known that $M$ admits a decomposition $$M=M_0\cup E_1 \cup \cdots \cup E_l$$ of $M$ into a compact manifold $M_0$ with boundary and disjoint non-compact ends $E_1,\ldots,E_l$ of finite volume.
Each end $E_i$ is diffeomorphic to $N_i\times (0,\infty)$ where $N_i$ is diffeomorphic to a compact Euclidean manifold.
More concretely, there exist disjoint horoballs $B_1,\ldots,B_l \subset \H^{m}$ based at $\xi_1,\ldots,\xi_l \in \bH^m$ respectively such that one can write $E_i=\Gamma_{B_i}\backslash B_i$, $N_i=\Gamma_{B_i}\backslash\partial B_i$ for $i=1,\ldots,l$ and $$M_0=\Gamma\backslash\left(\H^{m}\setminus \cup_{i=1}^l \Gamma\cdot B_i \right).$$
Note that the connected components of $\partial M_0$ consist of $N_1,\ldots,N_l$ and $\partial E_i=N_i$ for $i=1,\ldots,l$.
One can obtain $M$ by gluing $M_0$ and $E_1,\ldots,E_l$ along their respective boundaries $N_1,\ldots,N_l$.

One can compactify $M$ by adding one point to each end of $M$, the so-called \emph{end compactification of $M$}.
The end compactification of $M$ is realized by 
\[ \overline M=\Gamma\backslash\left(\H^{m}\cup \cup_{i=1}^l\Gamma\cdot \xi_i\right).
\]
Denote by $c_i$ the projection of $\Gamma\cdot \xi_i$ into $\overline M$ for $i=1,\ldots,l$.
Then $c_i$ is the one point added to the end $E_i$.
Each $c_i$ corresponds to a cusp point for $M$.

We will triangulate $\overline M$ in the following way: first, triangulate the compact manifold $M_0$ with boundary.
Denote by $\K_{M_0}$ the triangulation of $M_0$.
It induces a triangulation of the boundary $\partial M_0=N_1\cup\ldots\cup N_l$ of $M_0$.
Lifting the triangulation of $\partial M_0$ to $\H^{m}$, each horosphere $\partial B_i$ is dissected into simplices.
The collection of geodesic cones on simplices of $\partial B_i$ with the top point $\xi_i$ yields a $\Gamma_{B_i}$-invariant triangulation of $\overline{B}_i=B_i\cup \partial B_i \cup \{\xi_i\}$ and hence its projection to $\overline M$ induces a triangulation of $\overline E_i=E_i\cup\partial E_i\cup\{c_i\}$, denoted by $\K_{\overline E_i}$.
By adding to the simplices of $M_0$ all simplices of the triangulation $\K_{\overline E_i}$ of $\overline E_i$ for $i=1,\ldots,l$, one gets a triangulation $\K$ of $\overline M$.
Clearly $\K$ admits a decomposition $$ \K=\K_{M_0}\cup \K_{\overline{E}_1}\cup \cdots \cup \K_{\overline E_l}.$$
Note that each simplex of $\K$ is a simplex of either $M_0$ or $\overline E_1\cup\cdots\cup \overline E_l$ and, moreover, any lift of a simplex of $\K_{\overline E_i}$ to $\overline{B}_i$ is a geodesic cone on a simplex of $\partial B_i$ with the top point $\xi_i$.
Let $\K_\infty=\{c_1,\ldots,c_l\}$.
By straightening all simplices of $\K$, we can assume that every simplex of $\K$ is a geodesic simplex.
Let $\wK$ be the lifted triangulation of $\K$ to $\H^{m}\cup \cup_{i=1}^l \Gamma\cdot \xi_i$.
Let $\widetilde E_i$ be the inverse image of $E_i$ by the covering map $\H^{m}\to \Gamma\backslash\H^{m}=M$ for each $i=1,\ldots,l$.
Such a triangulation $\K$ of $M$ is called a \emph{generalized triangulation} of $M$.
Throughout this section, we assume that $\K$ is a generalized triangulation of $M$.

Given a representation $\rho:\Gamma\to \SO(m,1)$, one can construct a piecewise continuous $\rho$-equivariant and affine function $f:\H^{m}\to\H^{m}$ by the same method as that given by Besson--Courtois--Gallot in \cite{BCG07} as follows: Let $D\subset \H^{m}$ be a Dirichlet domain for $\Gamma$.
By perturbing $D$ a little, one can assume that no vertex of the triangulation $\wK$ in $\H^{m}$ is on the boundary of $D$.
Let $\{x_1,\ldots,x_N\}$ be the set of vertices of $\wK$ in the interior of $D$.
Choose points $y_1,\ldots,y_N\in \H^{m}$ and $\eta_1,\ldots,\eta_l \in \H^{m}\cup\partial_\infty \H^{m}$ so that each $\eta_i$ is a fixed point of $\rho(\Gamma_i)$ for $i=1,\ldots,l$.
Then set $f(x_i)=y_i$, $f(\xi_i)=\eta_i$, $f(\gamma \cdot x_i)=\rho(\gamma)\cdot y_i$ and $f(\gamma \cdot \xi_i)=\rho(\gamma)\cdot \eta_i$ for all $i=1,\ldots,N$ and all $\gamma\in \Gamma$.
Note that the images under $f$ of the vertices of $\wK$ are chosen $\rho$-equivariantly.
On each geodesic simplex $[v_0,\ldots,v_m]$ of $\wK$, define an affine map from $[v_0,\ldots,v_m]$ to $[f(v_0),\ldots,f(v_m)]$.
Gluing all such affine maps over all geodesic simplices of $\wK$ yields a piecewise continuous $\rho$-equivariant and affine map $f:\H^{m}\to\H^{m}$.
According to the proof of \cite[Lemma 5.2]{BCG07}, it is possible to choose $y_1,\ldots,y_N$ and $\eta_1,\ldots,\eta_l$ so that $f$ is non-degenerate in the sense of Lemma \ref{lem:BCGmap}.
For a detailed proof, we refer to the proof of \cite[Lemma 5.2]{BCG07} or \cite[Lemma 5.2]{KK16}.

In the proof of the integrality theorem for uniform lattices in Section \ref{sec:uniform}, Lemma \ref{lem:integrality}, which proves the integrality of $W(\K,\tau;f)$ for all $\tau\in\K$, is essential.
In the case of a non-uniform lattice, the integrality of $W(\K,\tau;f)$ does not always hold, unlike for uniform lattices.
However, it turns out that the integrality of $W(\K,\tau;f)$ still holds in the case of a non-uniform lattice for all $\tau\in\K$ except cusp points, which can be seen as follows.

\begin{lemma}\label{lem:ins}
Let $\Gamma$ be a torsion-free non-uniform lattice in $\SO(2n,1)$ and $\rho:\Gamma\to\SO(2n,1)$ be a representation. Let 
$f:\H^{2n}\to\H^{2n}$ be a piecewise continuous $\rho$-equivariant affine and non-degenerate map with respect to a generalized triangulation $\K$ of $M$.
Then, for all $\tau \in \K\setminus \K_\infty$,
$$W(\K,\tau;f)\in \Z.$$
\end{lemma}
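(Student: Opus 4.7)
The plan is to adapt the degree argument from Lemma~\ref{lem:integrality} to the non-uniform setting, replacing the role of $\partial\st(\wtau)$ (which could now contain ideal points) with a small geodesic sphere around an interior point of $\wtau$ that lies in $\H^{2n}$. The essential feature of the hypothesis $\tau \in \K \setminus \K_\infty$ is that, since $\tau$ is not a cusp vertex, even when $\wtau$ happens to have an ideal vertex its interior still meets $\H^{2n}$.

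Fix $\tau \in \K \setminus \K_\infty$, pick a lift $\wtau$ in $\wK$, let $x$ be an interior point of $\wtau$ lying in $\H^{2n}$, and choose $r>0$ small enough that the geodesic sphere $S(x,r) \subset \H^{2n}$ meets only simplices of $\wK$ containing $\wtau$ as a face. Then $S(x,r)$ is partitioned into the spherical pieces $S(x,r) \cap \wsigma$ as $\wsigma$ ranges over $\st(\wtau)$. Define $f_\tau \colon S(x,r) \to T^1_{f(x)}\H^{2n}$ by sending $y \in S(x,r) \cap \wsigma$ to the unit tangent vector at $f(x)$ pointing toward $f(y)$. By the non-degeneracy hypothesis, $f(\wsigma)$ is a non-degenerate geodesic simplex (possibly with one ideal vertex $\eta_i$ when $\wsigma \ni \xi_i$), so this direction is well-defined; agreement of the affine pieces of $f$ on shared faces makes $f_\tau$ continuous on all of $S(x,r)$.

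Both $S(x,r)$ and $T^1_{f(x)}\H^{2n}$ are $(2n-1)$-spheres, so $f_\tau$ has an integer degree. A change-of-variables argument on each piece identifies this degree with the desired sum: on $S(x,r) \cap \wsigma$ the map $f_\tau$ is a local diffeomorphism onto a spherical region in $T^1_{f(x)}\H^{2n}$ whose normalized $(2n-1)$-volume equals $W(\sigma,\tau;f)$, carrying orientation sign $\epsilon(f,\sigma)$. Summing over $\wsigma \in \st(\wtau)$ yields $\deg f_\tau = \sum_{\sigma \supset \tau} \epsilon(f,\sigma) W(\sigma,\tau;f) = W(\K,\tau;f)$, which is therefore an integer.

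The main obstacle is the case where $\wtau$ itself has an ideal vertex $\xi_i$: every simplex in $\st(\wtau)$ then shares $\xi_i$, and the images under $f$ all share the possibly ideal vertex $\eta_i \in \H^{2n} \cup \partial_\infty \H^{2n}$. One has to check that non-degeneracy still ensures $f_\tau$ is a well-defined continuous map into the unit tangent sphere at $f(x)$, and that the spherical pieces really compute the interior angles $W(\sigma,\tau;f)$; both points rely on the fact that $f(x)$ is an interior point of $f(\wtau)$, bounded away from $\eta_i$, so that all relevant directions from $f(x)$ are ordinary unit vectors in $T_{f(x)}\H^{2n}$.
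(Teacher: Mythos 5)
Your proposal is correct and follows essentially the same route as the paper: the paper's proof of Lemma~\ref{lem:ins} simply observes that an interior point $x$ of $\wtau$ lies in $\H^{2n}$, defines $f_\tau:\partial\st(\wtau)\to T^1_{f(x)}\H^{2n}$ exactly as in Lemma~\ref{lem:integrality}, and identifies $W(\K,\tau;f)$ with $\deg f_\tau\in\Z$. Your only deviation --- working with the small geodesic sphere $S(x,r)$ instead of $\partial\st(\wtau)$, and explicitly treating the case where $\wtau$ has an ideal vertex --- is a harmless (indeed slightly more careful) technical variant of the same degree argument.
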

\begin{proof}
If $\tau \in \K\setminus \K_\infty$, any interior point $x$ of $\tau$ is placed inside $\H^{2n}$, which makes it possible to define the map $f_\tau : \partial \st(\wtau)\to T^1_{f(x)}\H^{2n}$ as in the proof of Lemma \ref{lem:integrality}.
Then the exactly same proof of Lemma \ref{lem:integrality} works as well.
Therefore we conclude that $W(\K,\tau;f)=\deg f_\tau \in \Z$.
\end{proof}

It is worth pointing out that due to Lemma \ref{lem:ins}, it is sufficient to prove $W(\K,c_i;f)\in \Z$ for all $c_i \in\K_\infty$ in order to show Theorem \ref{thm:BBI}(i).

\subsubsection{The case of toric cusps}\label{sec:toric}

We first deal with the case where $M$ has only toric cusps.
We will construct a specific piecewise continuous $\rho$-equivariant affine map $f_0:\H^{2n}\to\H^{2n}$ by choosing $y_1,\ldots,y_N$ and $\eta_1,\ldots,\eta_l$ properly so that $W(\sigma,c_i;f_0)=0$ for all $\sigma\in\K$ having $c_i$ as a vertex and all $i=1,\ldots,l$.
First, note that each $\rho(\Gamma_{B_i})$ has a fixed point in either $\partial_\infty \H^{2n}$ or $\H^{2n}$.
If $\rho(\Gamma_{B_i})$ has a fixed point in $\partial_\infty\H^{2n}$, choose $\eta_i^0 \in \partial_\infty\H^{2n}$ and set $f_0(\xi_i)=\eta_i^0$.
Then it immediately follows from the fact that $\eta_i^0 \in \partial_\infty\H^{2n}$ that $W(\sigma,c_i;f_0)=W(f_0(\wsigma),f_0(\xi_i))=W(f_0(\wsigma),\eta_i^0)=0$ for any simplex $\wsigma\in\wK$ having $\xi_i$ as a vertex.
Otherwise (i.e., if $\rho(\Gamma_{B_i})$ has only fixed points in $\H^{2n}$), by the assumption that every end of $M$ is a toric cusp, $\rho(\Gamma_{B_i})$ is conjugate to a subgroup of $\mathrm{S}(\mathrm{O}(2)^{n} \times \mathrm{O}(1))$ (see \cite[Lemma 5.2]{BBI21}).
Hence,
$\rho(\Gamma_{B_i})$ stabilizes a proper totally geodesic subspace $H^2_i$ which is isometric to $\H^2$ and, moreover, fixes a point $\eta_i^0$ in $H^2_i$.
We set $f_0(\xi_i)=\eta_i^0$ in this case.
Choose $y_1^0,\ldots, y_N^0 \in\H^{2n}$ to define the function $f_0:\H^{2n}\to\H^{2n}$ so that every vertex of $\wK$ on $\partial B_i$ is mapped into $H^2_i$ by $f_0$ and, moreover, we require that for each $\sigma\in \K$, the restriction map of $f_0$ to the vertices of $\wsigma$ is injective.
If $n\geq 2$, for every $\sigma\in\K_{\overline E_i}$, $f_0(\wsigma)$ is degenerate since all vertices of $\wsigma$ are contained in a $2$-dimensional totally geodesic subspace of $\H^{2n}$ which is isometric to $\H^2$.
Summarizing, $f_0$ maps each geodesic simplex of the ends of $M$ to either a geodesic simplex with ideal top point or a degenerate geodesic simplex that is contained in a copy of $\H^2$.
We fix such a map $f_0$.

Next we will define a piecewise continuous $\rho$-equivariant affine and non-degenerate map $f:\H^{2n}\to\H^{2n}$ which is sufficiently close to $f_0$.
This $f$ will play an important role in our proof of the integrality theorem.
We say that a map $g:\H^{2n}\to\H^{2n}$ is \emph{$r$-close} to $f_0$ with respect to $\K$ if $d(g(v),f_0(v))<r$ for all vertices $v$ of $\wK$.
Following the proof of \cite[Lemma 5.2]{BCG07}, for each $r>0$ one can choose $y_i^r\in B(y_i,r)$ for $i=1,\ldots,N$ so that the function $f$ defined by using $y_1^r, \ldots, y_N^r$ instead of $y_1^0,\ldots,y_N^0$ is non-degenerate.
Then $f$ is a piecewise continuous $\rho$-equivariant affine and non-degenerate map that is $r$-close to $f_0$ with respect to $\K$.

\begin{lemma}\label{lem:const}
There is a constant $\delta>0$ such that if $f$ and $g$ are piecewise continuous $\rho$-equivariant affine and non-degenerate maps that are $\delta$-close to $f_0$ with respect to $\K$, then for all $\tau\in \K$,
$$W(\K,\tau;f)=W(\K,\tau,g),$$
and furthermore $W(\K,c_i;f)=W(\K,c_i;g)=0$ for all $c_i\in\K_\infty$.
\end{lemma}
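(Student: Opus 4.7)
The plan is to treat $\tau\in\K\setminus\K_\infty$ and $\tau=c_i\in\K_\infty$ separately. For non-cusp faces I will use the integrality of Lemma \ref{lem:ins} together with local constancy of a topological degree; for cusp faces I will rewrite $W(\K,c_i;f)$ as a de Rham integral on the closed flat manifold $N_i$ and force it to vanish via a $\rho(\Gamma_{B_i})$-equivariant homotopy enabled by the toric cusp hypothesis.

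For $\tau\in\K\setminus\K_\infty$, Lemma \ref{lem:ins} shows $W(\K,\tau;f)=\deg f_\tau\in\Z$ for every non-degenerate $f$. The space of piecewise continuous $\rho$-equivariant affine maps is finite-dimensional, parametrised by the images of the vertices of $\wK$ in a fundamental domain; non-degeneracy with respect to $\K$ is an open condition whose complement is a real-analytic subset of positive codimension. Thus for $\delta$ small enough the $\delta$-neighbourhood of $f_0$ minus this subset is path-connected, and along any path between $f$ and $g$ the integer-valued degree $\deg f_\tau$ is locally constant, so $W(\K,\tau;f)=W(\K,\tau;g)$.

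For $\tau=c_i\in\K_\infty$, I would consider the two sub-cases in the construction of $f_0$. When $\eta_i^0\in\partial_\infty\H^{2n}$, $\rho$-equivariance forces $f(\xi_i)=\eta_i^0$ to be ideal, and the convention on ideal apices gives $W(\sigma,c_i;f)=0$ for every $\sigma\ni c_i$, hence $W(\K,c_i;f)=0$. When $\eta_i^0\in\H^{2n}$, let $\pi_{\eta_i^0}\colon\H^{2n}\setminus\{\eta_i^0\}\to T^1_{\eta_i^0}\H^{2n}\cong S^{2n-1}$ be radial projection; for $\sigma\in\K_{\overline E_i}$ containing $c_i$ and $\wsigma'$ the lift to $\partial B_i$ of the face of $\wsigma$ opposite $\xi_i$, one has
\[
\epsilon(f,\sigma)\,W(\sigma,c_i;f)=\frac{1}{\vol(S^{2n-1})}\int_{\wsigma'}(\pi_{\eta_i^0}\circ f)^*\omega_{S^{2n-1}}.
\]
Because $\rho(\Gamma_{B_i})$ fixes $\eta_i^0$ and acts by isometries on $S^{2n-1}$, the pullback form is $\Gamma_{B_i}$-invariant and descends to a closed $(2n-1)$-form $\alpha_f$ on $N_i=\Gamma_{B_i}\backslash\partial B_i$, yielding $W(\K,c_i;f)=\vol(S^{2n-1})^{-1}\int_{N_i}\alpha_f$. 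For $f$ sufficiently close to $f_0$, the image of $\pi_{\eta_i^0}\circ f$ lies in an arbitrarily small tubular neighbourhood $U$ of the round subsphere $T^1_{\eta_i^0}H^2_i\cong S^1\subset S^{2n-1}$; the toric cusp hypothesis that $\rho(\Gamma_{B_i})$ stabilises $H^2_i$ ensures that the nearest-point projection $r\colon U\to S^1$ for the round metric is $\rho(\Gamma_{B_i})$-equivariant. Composing with $r$ produces a $\Gamma_{B_i}$-equivariant homotopy from $\pi_{\eta_i^0}\circ f$ to a map into $S^1$, whose pullback of $\omega_{S^{2n-1}}$ vanishes identically for $n\ge 2$ on dimensional grounds. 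Therefore $[\alpha_f]=0$ in $H^{2n-1}(N_i;\R)$, giving $W(\K,c_i;f)=0$.

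The main obstacle I anticipate is the cusp step when $\eta_i^0\in\H^{2n}$: writing $W(\K,c_i;f)$ as the integral of a closed form on $N_i$ requires careful bookkeeping of orientations and of the $(\Gamma_{B_i},\rho)$-equivariance, and the passage from the equivariant homotopy to the vanishing of the de Rham class on $N_i$ uses exactly the toric cusp hypothesis via the $\rho(\Gamma_{B_i})$-invariant totally geodesic copy of $\H^2$ containing $\eta_i^0$. Once this is established, uniformity of $\delta$ over the finite set of faces $\tau$ and cusp points $c_i$ is immediate, and combined with the integer-valued local constancy for non-cusp faces this proves the lemma.
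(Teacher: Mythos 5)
Your overall architecture (integrality plus a constancy argument for $\tau\notin\K_\infty$; the copy of $\H^2_i$ forcing vanishing at the cusps) parallels the paper, but both of your key steps have genuine gaps. For the non-cusp faces, the path-connectedness claim fails: non-degeneracy of a single simplex is the non-vanishing of a determinant-type function of the vertex images, so the degenerate locus is a real-analytic \emph{hypersurface} (codimension one), and the complement of a codimension-one set in a ball is in general disconnected -- the two sides correspond precisely to the two values of $\epsilon(f,\sigma)$. This is not a removable technicality here, because $f_0$ is degenerate by construction on every simplex of the ends, so the $\delta$-ball around $f_0$ always straddles this hypersurface and you cannot connect $f$ to $g$ through non-degenerate maps. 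The paper instead exploits that the signed sum $\sum_{\sigma}\epsilon(\cdot,\sigma)W(\sigma,\tau;\cdot)$ varies continuously with the vertex images even across the degenerate locus (the vertices of each $f_0(\wsigma)$ being pairwise separated by $4d_0$), obtains the quantitative bound $|W(\K,\tau;f)-W(\K,\tau;g)|<1/2$, and only then invokes the integrality of Lemma \ref{lem:ins}.

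For the cusp faces your central geometric claim -- that the image of the radial projection of $f|_{\partial B_i}$ lies in an arbitrarily small tubular neighbourhood of $S^1=T^1H^2_i$ -- is unjustified and false in general. By convexity each $f(\wsigma')$ lies in the $\delta$-neighbourhood of $H^2_i$, but so does the projection centre $f(\xi_i)$ (which, incidentally, is the correct centre, not $\eta_i^0$), and nothing prevents the $(2n-1)$-simplex $f(\wsigma')$ from passing at distance $h\ll\delta$ from $f(\xi_i)$; this happens for every $f$ near $f_0$ whenever $\eta_i^0$ lies in the geodesic convex hull of the $f_0$-images of the vertices of $\wsigma'$. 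Points of $f(\wsigma')$ at distance comparable to $h$ from the centre project to directions far from $T_{f(\xi_i)}H^2_i$ and can sweep out a solid angle of order one, so the retraction $r$ onto $S^1$ is simply not defined on the image and the homotopy collapsing the de Rham class does not exist. The paper avoids exactly this local difficulty by a global input your argument never uses: the independence of $\vol(\rho)$ from the choice of non-degenerate $f$, applied to maps $f_k$ that agree with $f$ outside $\widetilde E_i$ and converge to $f_0$ on $\widetilde E_i$, forces $W(\K,c_i;f_k)=W(\K,c_i;f)$ for all $k$ before any limit is taken. To salvage your purely local cohomological route you would need an additional mechanism controlling the contribution of simplices passing near the projection centre.
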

\begin{proof}
First, recall that the restriction of $f_0$ to the vertex set of $\wsigma$ is injective for each $\sigma\in\K$.
This means that there is a positive number $d_\sigma>0$ such that any two vertices of $f_0(\wsigma)$ are apart by a distance of at least $d_\sigma$.
Let $d_0=\min\{d_\sigma \mid \sigma\in\K\}/4$.
Then any two vertices of $f_0(\wsigma)$ are apart by a distance of at least $4d_0$ for every $\sigma \in\K$.
Note that $d_0$ is positive since $\K$ is a finite set.
Roughly speaking, any two vertices of $f_0(\wsigma)$ are far enough apart.

Let $f$ and $g$ be piecewise continuous $\rho$-equivariant affine and non-degenerate maps that are $r$-close to $f_0$ with respect to $\K$.
For any $v\in\wK$, 
\begin{align}\label{eqn:nbd} d(f_0(v),f(v))<r \text{ and } d(f_0(v), g(v))<r.\end{align}
Assume that $r<d_0$.
From (\ref{eqn:nbd}) and the fact that
any two vertices of $f_0(\wsigma)$ are apart by a distance of at least $4d_0$, it can be easily seen that any two vertices of $f(\wsigma)$ or $g(\wsigma)$ are apart by a distance of at least $4d_0-2r>2d_0$.
The corresponding vertices of $f(\wsigma)$ and $g(\wsigma)$ are apart by a distance of at most $2r$.
In this situation, one can deduce that as $r$ gets closer to $0$, the corresponding vertices of $f(\wsigma)$ and $g(\wsigma)$ get closer to each other and, furthermore, their corresponding angles also get closer to each other.
Therefore there is a constant $\delta>0$ such that for all piecewise continuous $\rho$-equivariant affine and non-degenerate maps $f, g$ that are $\delta$-close to $f_0$ with respect to $\K$ and all $\tau\in \K$,
\begin{align}\label{eqn:res} | W(\K,\tau;f) -W(\K,\tau;g) | <1/2 \end{align}
By Lemma \ref{lem:ins}, both $W(\K,\tau;f)$ and $W(\K,\tau;f)$ are integers for all $\tau \in \K\setminus \K_\infty$.
Combining the integrality of $W(\K,\tau;f)$ and $W(\K,\tau;g)$ for $\tau \in \K\setminus \K_\infty$ with (\ref{eqn:res}), it is easy to see that for all $\tau\in\K\setminus\K_\infty$, 
$$W(\K,\tau;f)=W(\K,\tau,g).$$
Now it remains to prove the lemma for $c_i\in\K_\infty$.
Indeed, it is sufficient to prove that $W(\K,c_i;f)=0$ for all $c_i\in\K_\infty$.

Let $f$ be a piecewise continuous $\rho$-equivariant affine and non-degenerate map that is $\delta$-close to $f_0$ with respect to $\K$.
Fix $c_i\in\K_\infty$ where $c_i$ is the cusp point corresponding to the end $E_i$ of $M$.
Then for each $k\in\N$, define a piecewise continuous $\rho$-equivariant affine and non-degenerate map $f_k : \H^{2n}\to \H^{2n}$ by setting $f_k(v)=f(v)$ if $v$ is not a vertex in $\widetilde{E}_i$ and $f_k(v) \in B_{\delta/k}(f_0(v))$ $\rho$-equivariantly if $v$ is a vertex in $\widetilde{E}_i$.
Obviously every $f_k$ is $\delta$-close to $f_0$ with respect to $\K$.
Then
\begin{align}
(-1)^{n}\frac{2\vol(\rho)}{\vol(S^{2n})}&=\sum_{\tau\in\K\setminus\K_\infty} (-1)^{\dim \tau} W(\K,\tau;f_k)+\sum_{\tau\in\K_\infty} (-1)^{\dim \tau} W(\K,\tau;f_k) \label{eqn:exp1}\\
&=\sum_{\tau\in\K\setminus\K_\infty} (-1)^{\dim \tau} W(\K,\tau;f)+\sum_{\tau\in\K_\infty} (-1)^{\dim \tau} W(\K,\tau;f) \label{eqn:exp2}
\end{align}
Since $f_k$ is $\delta$-close to $f_0$ with respect to $\K$, the above proof implies that $W(\K,\tau,f_k)=W(\K,\tau,f)$ for all $\tau\in \K\setminus\K_\infty$ and all $k\in \N$.
Furthermore, by the definition of $f_k$, $W(\K,c_j,f_k)=W(\K,c_j,f)$ for all $j\neq i$ and all $k\in\N$.
Thus from (\ref{eqn:exp1}) and (\ref{eqn:exp2}), we have that for all $k\in\N$,
$$W(\K,c_i,f_k)=W(\K,c_i,f).$$
As $k$ goes to infinity, $f_k$ converges to $f_0$ on $\widetilde{E}_i$ and thus the opposite $(2n-1)$-face to $f_k(\xi_i)$ in $f_k(\wsigma)$ gets closer to a degenerate geodesic simplex contained in a copy of $\H^2$, which means that
$W(\sigma,c_i;f_k)$ goes to $0$.
Therefore,
 $$W(\K,c_i;f)=\lim_{k\to\infty} W(\K,c_i,f)=\lim_{k\to\infty} W(\K,c_i,f_k)=0$$
which completes the proof.
\end{proof}

We are ready to prove Theorem \ref{thm:BBI}(i).
In fact, Lemma \ref{lem:const} is essential in the proof of Theorem \ref{thm:BBI}(i).

\begin{proof}[Proof of Theorem \ref{thm:BBI}(i)]
Lemmas \ref{lem:ins} and \ref{lem:const} imply that for any piecewise continuous $\rho$-equivariant affine and non-degenerate map $f$ that is $\delta$-close to $f_0$ with respect to $\K$, $$W(\K,\tau;f)\in \Z \text{ for all }\tau \in \K.$$
Then from the equation
\begin{align*}
(-1)^{n}\frac{2\vol(\rho)}{\vol(S^{2n})}
&=\sum_{\tau\in\K} (-1)^{\dim \tau} W(\K,\tau;f),
\end{align*} 
Theorem \ref{thm:BBI}(i) immediately follows.
\end{proof}

We have shown that for every piecewise continuous $\rho$-equivariant affine and non-degenerate map $f :\H^{2n}\to\H^{2n}$ that is sufficiently close to a specific $\rho$-equivariant degenerate map $f_0 :\H^{2n}\to\H^{2n}$, all $W(\K,\tau;f)$ are integers.
Indeed, this property of integrality holds for any piecewise continuous $\rho$-equivariant affine and non-degenerate map, as follows.

\begin{lemma}\label{lem:toric}
Let $f :\H^{2n} \to \H^{2n}$ be an arbitrary piecewise continuous $\rho$-equivariant affine and non-degenerate map with respect to $\K$.
Then for all $\tau \in \K$,
\[ W(\K,\tau;f)\in \Z.\]
\end{lemma}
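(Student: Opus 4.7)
By Lemma \ref{lem:ins} it suffices to prove $W(\K, c_i; f) \in \Z$ for every cusp point $c_i \in \K_\infty$, so fix one such $c_i$. The plan is to adapt the cusp-by-cusp limiting argument from the proof of Lemma \ref{lem:const}, now leveraging the already established Theorem \ref{thm:BBI}(i) (which gives $(-1)^n \frac{2\vol(\rho)}{\vol(S^{2n})} \in \Z$) to sidestep the fact that our arbitrary $f$ need not be $\delta$-close to $f_0$.

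Let $f_0$ be the specific degenerate $\rho$-equivariant map constructed at the beginning of \S\ref{sec:toric}. Following verbatim the construction of $f_k$ in the proof of Lemma \ref{lem:const}, build a sequence of piecewise continuous $\rho$-equivariant affine and non-degenerate maps $\{f_k\}_{k\in\N}$ with $f_k(v)=f(v)$ for every vertex $v$ of $\wK$ not in $\widetilde E_i$, and $f_k(v)\in B_{1/k}(f_0(v))$ (chosen $\rho$-equivariantly and generically to preserve non-degeneracy) for every vertex $v$ of $\wK$ in $\widetilde E_i$. Three facts will drive the argument: \textbf{(a)} by the same collapse-into-$H^2_i$ argument that appears at the end of the proof of Lemma \ref{lem:const}, $W(\K,c_i;f_k)\to 0$ as $k\to\infty$, because for each $\wsigma=\xi_i*\wsigma'$ containing $\xi_i$ the opposite face $f_k(\wsigma')$ collapses into a degenerate simplex in $H^2_i$ while $f_k(\xi_i)\to\eta_i^0\in H^2_i$; \textbf{(b)} $W(\K,c_j;f_k)=W(\K,c_j;f)$ for all $j\neq i$ and all $k$, because the star of $c_j$ consists of simplices in $\overline E_j$ and none of their vertices lie in $\widetilde E_i$, so $f_k$ and $f$ agree there; \textbf{(c)} by Lemma \ref{lem:ins}, $W(\K,\tau;f_k)\in\Z$ for all $\tau\in\K\setminus\K_\infty$ and all $k$, and this sequence of integers converges (hence is eventually constant), because the interior angles of $f_k(\wsigma)$ vary continuously with the vertex positions on the set of non-degenerate configurations, and tend to $0$ on simplices whose image collapses into $H^2_i$.

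Now apply identity~(\ref{eqn:ve}) to both $f_k$ and $f$: for each $k$,
\[
\sum_{\tau\in\K}(-1)^{\dim\tau}W(\K,\tau;f_k)=(-1)^n\frac{2\vol(\rho)}{\vol(S^{2n})}=\sum_{\tau\in\K}(-1)^{\dim\tau}W(\K,\tau;f).
\]
Subtracting and using (b) to cancel the $c_j$-contributions for $j\neq i$, we obtain
\[
W(\K,c_i;f_k)-W(\K,c_i;f)=-\sum_{\tau\in\K\setminus\K_\infty}(-1)^{\dim\tau}\bigl[W(\K,\tau;f_k)-W(\K,\tau;f)\bigr].
\]
By (c) the right-hand side is an integer $I$ independent of $k$ for all $k$ large enough, and by (a) the left-hand side tends to $-W(\K,c_i;f)$; hence $W(\K,c_i;f)=-I\in\Z$, as required. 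The main technical point in this plan is (c): one has to verify that when a $2n$-simplex in the star of $\tau$ has $f_k$-image collapsing into the 2-dimensional plane $H^2_i$, all of its interior angles tend to $0$ (the solid angle subtended by a collapsing geodesic simplex at any of its vertices has vanishing spherical measure), while on simplices whose $f_k$-image stays non-degenerate the angles vary continuously in the vertex positions. Once these continuity and collapse facts are in hand, the integrality supplied by Lemma \ref{lem:ins} forces the required stabilization.
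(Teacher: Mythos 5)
Your argument is correct, and it reaches the conclusion by a route that is genuinely different in execution from the paper's, although both share the same skeleton: reduce to the cusp points via Lemma \ref{lem:ins}, and exploit the degenerate map $f_0$ through the identity (\ref{eqn:ve}). The paper isolates $W(\K,c_i;f)$ by modifying $f$ \emph{away} from the cusp of interest: it builds a single auxiliary non-degenerate map $f_i$ agreeing with $f$ on $\widetilde E_i$ and $\delta$-close to $f_0$ on the other ends, so that Lemma \ref{lem:const} (used as a black box) forces $W(\K,c_j;f_i)=0$ for $j\neq i$ and $W(\K,c_i;f_i)=W(\K,c_i;f)$; then (\ref{eqn:ve}) together with Theorem \ref{thm:BBI}(i) and Lemma \ref{lem:ins} exhibits $W(\K,c_i;f)$ as a difference of integers. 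You instead modify $f$ \emph{on} the cusp of interest, pushing it toward $f_0$ there and taking a limit; this essentially re-runs the second half of the proof of Lemma \ref{lem:const} starting from an arbitrary $f$ rather than a $\delta$-close one. Your subtraction of the two instances of (\ref{eqn:ve}) cancels the volume term, so --- despite your framing --- you never actually need Theorem \ref{thm:BBI}(i): the integrality of $W(\K,c_i;f)$ falls out because each $I_k=W(\K,c_i;f_k)-W(\K,c_i;f)$ is an integer and $I_k\to -W(\K,c_i;f)$; even the eventual constancy asserted in (c) is dispensable, since a convergent sequence of integers has an integer limit. What your route buys is independence from Theorem \ref{thm:BBI}(i); what it costs is the extra verification in (c) that the angle sums $W(\K,\tau;f_k)$ at non-cusp faces meeting $\overline E_i$ converge --- continuity of solid angles on non-degenerate configurations together with the vanishing of the solid angle of a simplex collapsing into the $2$-plane $H^2_i$, which is exactly where $n\geq 2$ enters. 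Both the collapse mechanism and the continuity statement already appear in the paper's proof of Lemma \ref{lem:const}, so your appeal to them is legitimate.
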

\begin{proof}
By Lemma \ref{lem:ins}, $W(\K,\tau;f)\in \Z$ for all $\tau\in \K\setminus\K_\infty$.
Thus it is sufficient to prove that $ W(\K,c_i;f)\in \Z$ for all $c_i \in \K_\infty$.
Since the ends $E_1,\ldots,E_l$ of $M$ are pairwise disjoint, it is possible to choose the images of vertices of $\widetilde E_1,\ldots,\widetilde E_l$ independently.
Hence for each $i=1,\ldots,l$, it is possible to construct a piecewise continuous $\rho$-equivariant affine and non-degenerate map $f_i$ such that $f_i$ coincides with $f$ on the vertex set of $\widetilde E_i$ and $f_i$ is $\delta$-close to $f_0$ with respect to the vertex sets of the other ends $\widetilde E_1,\ldots,\widetilde E_{i-1},\widetilde E_{i+1},\ldots,\widetilde E_l$.
By Lemma \ref{lem:const}, $W(\K,c_j;f_i)=0$ for all $j\neq i$ and $W(\K,c_i;f_i)=W(\K,c_i;f)$.
Then we have that
\begin{align*}
(-1)^{n}\frac{2\vol(\rho)}{\vol(S^{2n})}
&=\sum_{\tau\in\K} (-1)^{\dim \tau} W(\K,\tau;f_i)\\
&=\sum_{\tau\in\K\setminus \K_\infty} (-1)^{\dim \tau}W(\K,\tau;f_i)+\sum_{j=1}^l W(\K,c_j;f_i)  \\
&=\sum_{\tau\in\K\setminus \K_\infty} (-1)^{\dim \tau}W(\K,\tau;f_i)+W(\K,c_i;f).
\end{align*}
According to Theorem \ref{thm:BBI}(i) and Lemma \ref{lem:ins}, in the above equation, all terms except for $W(\K,c_i;f)$ are integers, which leads us to conclude that $W(\K,c_i;f)\in\Z$.
\end{proof}

We will use Lemma \ref{lem:toric} to prove the integrality theorem in the general case.

\subsubsection{The general case}

Finally, we will prove the integrality theorem for the general case.
Let $\Gamma$ be a general torsion-free non-uniform lattice in $\SO(2n,1)$ and $M=\Gamma\backslash \H^{2n}$.
Then $\Gamma$ has a subgroup $\Gamma'$ of finite index such that the quotient manifold $M'=\Gamma'\backslash\H^{2n}$ has only toric cusps.
Let $E_1,\ldots, E_l$ be the disjoint non-compact ends of $M$.
Recall that there exist disjoint horoballs $B_1,\ldots,B_l \subset \H^{2n}$ based at $\xi_1,\ldots,\xi_l \in \bH^n$ respectively such that one can write $E_i=\Gamma_{B_i}\backslash B_i$ and $E_i$ is homeomorphic to $N_i\times (0,\infty)$ where $N_i=\Gamma_{B_i}\backslash\partial B_i$ for $i=1,\ldots,l$.
Let $E_i'=\Gamma_{B_i}'\backslash B_i$ and $N_i'=\Gamma_{B_i}'\backslash\partial B_i$ for $i=1,\ldots,l$.
Then these $E_1',\ldots,E_l'$ are the pairwsie disjoint non-compact ends of $M'$.
Furthermore the covering map $\pi :M'\to M$ induces covering maps $E_i'\to E_i$ and $N_i'\to N_i$.
Note that each $N_i'$ is a flat torus by assumption.

Let $\overline M$ and $\overline{M'}$ be the end compactifications of $M$ and $M'$ respectively.
Denote by $c_i$ and $c_i'$ the end points added to $E_i$ and $E_i'$ respectively for $i=1,\ldots,l$.
Then the covering map $\pi :M'\to M$ is extended to the map ${\pi}:\overline{M'}\to \overline{M}$ by setting
\[{\pi}(c_i')=c_i \text{ for } i=1,\ldots,l.\]

Choose a triangulation $\K$ of $\overline M$ as constructed in the beginning of Section \ref{sec:nonuniform} and get a triangulation $\K'$ of $\overline{M'}$ by lifting the triangulation $\K$ to $\overline{M}$ by $\pi$.
It is obvious that both $\K$ and $\K'$ induce the same lifted triangulation of $\H^{2n}\cup \cup_{i=1}^l\Gamma\cdot \xi_i$.

Given a representation $\rho :\Gamma\to\SO(2n,1)$, let $\rho':\Gamma'\to\SO(2n,1)$ be the restriction of $\rho$ to $\Gamma'\subset \Gamma$.
Choose a piecewise continuous, $\rho$-equivariant, affine and non-degenerate map $f:\H^{2n}\to \H^{2n}$ with respect to $\K$.
Clearly the map $f$ is also a piecewise continuous, $\rho'$-equivariant, affine and non-degenerate map with respect to $\K'$.
Let $\tau\in \K$ and $x$ be an interior point of $\tau$.
Let $\tau'$ and $x'$ be the lifts of $\tau$ and $x$ to $M'$ respectively so that $x'$ is an interior point of $\tau'$.
If $\tau\notin \K_\infty=\{c_1,\ldots,c_l\}$, then $x$ is in $M$.
Thus there exists a neighborhood $U'$ of $x'$ such that $U'$ is mapped isometrically onto a neighborhood $U:=\pi(U')$ of $x$ by the covering map $\pi : M' \to M$.
Noting that $\K'$ is the lifted triangulation of $\K$ to $M'$ by $\pi$, it is easy to deduce that
\begin{equation}\label{eqn:int} W(K,\tau;f)=W(K',\tau';f).\end{equation}
Since $M'$ has only toric cusps, Lemma \ref{lem:toric} implies $W(K',\tau';f)\in \Z$ and thus (\ref{eqn:int}) implies $W(K,\tau;f)\in \Z$ for all $\tau\in \K\setminus\K_\infty$.

If $\tau\in \K_\infty$, the situation is a little different.
Let $\tau=c_i$ for some $i=1,\ldots,l$.
Recall that every simplex $\sigma\in E_i$ containing $c_i$ as a vertex is a geodesic cone on a simplex of $N_i=\partial E_i$ with the top point $c_i$.
Therefore, it can be easily seen that for a sufficiently fine triangulation $\K$, the number of lifts of $\sigma$ by the covering map $E_i'\to E_i$ equals the degree of the covering map $N_i'\to N_i$.
Moreover, each lift of $\sigma$ is a geodesic simplex on a simplex of $N_i'=\partial E_i'$ with the top point $c_i'$.
Thereofore, we deduce that
\begin{equation}\label{eqn:end} \deg(N_i'\to N_i) \cdot W(\K,c_i;f)=W(\K',c_i';f).\end{equation}

Lemma \ref{lem:toric} implies $W(\K',c_i';f) \in \Z$.
Furthermore $\deg(N_i'\to N_i)$ divides $B_{2n-1}$ since $N_i'$ is a flat torus of dimension $2n-1$.
Thus for all $i=1,\ldots,l$, $$W(\K,c_i;f) =\frac{W(\K',c_i',f)}{\deg(N_i'\to N_i)} \in  \frac{W(\K',c_i',f)}{B_{2n-1}}\cdot \Z \subset \frac{1}{B_{2n-1}}\cdot \Z.$$
Summarizing so far, $W(\K,\tau;f)\in \Z$ if $\tau\notin \K_\infty$ and $W(\K,\tau;f)\in \frac{1}{B_{2n-1}}\cdot \Z$ if $\tau\in \K_\infty$.
Therefore,
\[ (-1)^{n}\frac{2\vol(\rho)}{\vol(S^{2n})} =\sum_{\tau\in\K} (-1)^{\dim \tau} W(\K,\tau;f) \in \frac{1}{B_{2n-1}}\cdot\Z,\]
which completes the proof of Theorem \ref{thm:BBI}(ii).~\qed

\begin{bibdiv}
\begin{biblist}[\normalsize]

\bib{BBI21}{article}{
   author={Bucher, M.},
      author={Burger, M.},
author={Iozzi, A.},
   title={Integrality of volumes of representations},
   journal={Math. Ann.},
   volume={381},
   date={2021},
   number={1--2},
   pages={209--242}
}

\bib{BBI13}{book}{
author={Bucher, M.},
author={Burger, M.},
author={Iozzi, A.},
title={A Dual Interpretation of the Gromov--Thurston Proof of Mostow Rigidity and Volume Rigidity for Representations of Hyperbolic Lattices},
series={Trends in Harmonic Analysis, Volume 3},
publisher={Springer-Verlag},
address = {Berlin},
date={2013},
pages={47--76}
}

\bib{Ho}{article}{
   author={Hopf, H.},
   title={Die Curvatura Integra Clifford--Kleinscher Raumformen},
   journal={Nachr. Ges. Wiss. G\"{o}ttingen, Math.-Phys. Kl. },
   date={1925},
   pages={131--141}
}

\bib{Po}{article}{
   author={Poincar\'e, M.},
   title={Sur la g\'en\'eralisation d'un th\'eor\`eme \'el\'ementaire de G\'eom\'etrie},
   journal={Math. Ann.},
   volume={381},
   date={2021},
   number={1-2},
   pages={209--242}
}

\bib{BCG07}{article}{
   author={Besson, G.},
   author={Courtois, G.},
   author={Gallot, S.},
   title={In\'{e}galit\'{e}s de Milnor--Wood g\'{e}om\'{e}triques},
   journal={Comment. Math. Helv.},
   volume={82},
   date={2007},
   number={4},
   pages={753--803}
}

\bib{KK16}{article}{
   author={Kim, S.},
   author={Kim, I.},
   title={On deformation spaces of nonuniform hyperbolic lattices},
   journal={Math. Proc. Cambridge Philos. Soc.},
   volume={161},
   date={2016},
   number={2},
   pages={283--303}
}

\bib{KZ01}{article}{
   author={Kellerhals, R.},
   author={Zehrt, T.},
   title={The Gauss–Bonnet formula for hyperbolic manifolds of finite volume},
   journal={Geom. Dedicata},
   volume={84},
   date={2001},
   %number={2},
   pages={49--62}
}

\bib{Go82}{article}{
   author={Goldman, W. M.},
   title={Characteristic classes and representations of discrete subgroups of Lie groups},
   journal={Bull. Amer. Math. Soc.},
   volume={6},
   date={1982},
   number={1},
   pages={91--94},
}

\bib{Du99}{article}{
   author={Dunfield, N. M.},
   title={Cyclic surgery, degrees of maps of character curves, and volume rigidity for hyperbolic manifolds},
   journal={Invent. Math.},
   volume={136},
   date={1999},
   number={3},
   pages={623--657},
}

\bib{Fa04}{article}{
   author={Francaviglia, S.},
   title={Hyperbolic volume of representations of fundamental groups of cusped 3-manifolds},
   journal={Int. Math. Res. Not.},
   %volume={161},
   date={2004},
   number={9},
   pages={425--459},
}

\bib{Re96}{article}{
   author={Reznikov, A.},
   title={Rationality of secondary classes},
   journal={J. Differential Geom.},
   volume={43},
   date={1996},
   number={3},
   pages={674--692},
}

\bib{Gr81}{book}{
   author={Gromov, M.},
   title={Hyperbolic Manifolds According to Thurston and Jorgensen},
series={Lecture Notes in Mathematics},
volume={546},
publisher={Springer-Verlag},
address = {Berlin},
date={1981},
pages={40--53}
}

\bib{Th78}{book}{
   author={Thurston, W.},
   title={Geometry and Topology of $3$-manifolds},
series={Notes from Princeton University},
publisher={Princeton},
date={1979}
}

\end{biblist}
\end{bibdiv}

\vspace{1\baselineskip}\noindent
Department of Mathematics,
Jeju National University,
Jeju 63243,
Republic of Korea\\
\url{sungwoon@jejunu.ac.kr}

\end{document}